\documentclass{amsart}
\usepackage[utf8]{inputenc}
\usepackage{amsmath}
\usepackage{amsthm}
\usepackage{amsfonts}
\usepackage{amssymb}
\usepackage[all]{xy}
\usepackage{indentfirst}

\newtheorem{thm}{Theorem}[section]

\newtheorem{theorem}[thm]{Theorem}

\newtheorem{lemma}[thm]{Lemma}
\newtheorem*{theorem*}{Theorem}

\theoremstyle{definition}

\newtheorem{remark}[thm]{Remark}

\newcommand{\N}{\mathbb{N}} 
\newcommand{\Z}{\mathbb{Z}} 
\newcommand{\C}{\mathbb{C}} 
\newcommand{\Q}{\mathbb{Q}} 
\usepackage{verbatim} 
\usepackage{color}

\newcommand{\cs}{\mathrm{C}^*}

\title{A torsion-free algebraically C*-unique group}
\author{Eduardo Scarparo}
\thanks{This work was carried out during the tenure of an ERCIM ‘Alain Bensoussan’ Fellowship Programme.}
\address{Department of Mathematical Sciences, NTNU, NO-7491 Trondheim, Norway}
\email{eduardo.scarparo@ntnu.no}
\subjclass[2010]{22D25}

\begin{document}

\begin{abstract}
Let $p$ and $q$ be multiplicatively independent integers. We show that the complex group ring of $\mathbb{Z}[\frac{1}{pq}]\rtimes\Z^2$ admits a unique $\mathrm{C}^*$-norm. The proof uses a characterization, due to Furstenberg, of closed $\times p-$ and $\times q-$invariant subsets of $\mathbb{T}$. 
\end{abstract}

\maketitle

\section{Introduction}
 
Given a group $G$, there are in general many $\mathrm{C}^*$-norms on its complex group ring $\C [G]$. For example, if $G=\Z$, identifying $\C[\Z]$ with complex polynomials $p(z)=\sum\alpha_nz^n$ defined on $\mathbb{T}$, we have that any infinite closed subset $F\subset \mathbb{T}$ gives rise to a distinct $\mathrm{C}^*$-norm $\|\cdot\|$ on $\C[\Z]$, defined by $\|p\|:=\sup_{z\in F}|p(z)|$, for $p\in\C[\Z]$ (this was noted in \cite[Chapter 19]{zbMATH06801030} and \cite{zbMATH06856863}). . 

Following \cite{zbMATH07050829}, we say that a group $G$ is \emph{algebraically $\mathrm{C}^*$-unique} if $\C [G]$ admits a unique $\mathrm{C}^*$-norm. 

Clearly, any algebraically $\mathrm{C}^*$-unique group must be amenable. In \cite{zbMATH06856863}, Grigorchuk, Musat and Rørdam proved that any locally finite group is algebraically $\mathrm{C}^*$-unique and asked whether the converse holds. Furthermore, Alekseev and Kyed obtained in \cite{zbMATH07050829} large classes of amenable groups which are not algebraically $\mathrm{C}^*$-unique. 

In \cite{zbMATH07118831}, Caspers and Skalski studied the question of $\mathrm{C}^*$-uniqueness in the context of discrete quantum groups, and showed that there exists a $\mathrm{C}^*$-unique discrete quantum group which is not locally finite.

As told by Alekseev in his report for the 2019 workshop ``$\cs$-algebras" at Oberwolfach \cite{Alekseevoberwolfach}, Ozawa pointed out, during the meeting, that the lamplighter group $(\bigoplus_\Z\Z_2)\rtimes\Z$ is algebraically $\mathrm{C}^*$-unique and not locally finite, thus answering in the negative the question in \cite{zbMATH06856863}. Alekseev asked then in \cite{Alekseevoberwolfach} whether there exists a \emph{torsion-free} group which is algebraically $\mathrm{C}^*$-unique.

In this paper, by adapting Ozawa's argument, we provide an example of a torsion-free algebraically $\mathrm{C}^*$-unique group $G$. The proof that $G$ is algebraically $\mathrm{C}^*$-unique uses a certain result by Furstenberg (\cite{zbMATH03236196}) on diophantine approximation.

\section{A torsion-free algebraically $\mathrm{C}^*$-unique group}

Fix $p$ and $q$ integer such that $p,q\geq 2$ and such that there are no $r,s\in\N$ such that $p^r=q^s$ (this means that $p$ and $q$ are \emph{multiplicatively independent}). 

Let $\Z[\frac{1}{pq}]$ be the additive group $\{\frac{a}{(pq)^n}\in\Q:a\in\Z,n\in\N\}$, and $\alpha$ the action of $\Z^2$ on $\Z[\frac{1}{pq}]$ given by $\alpha_{(n,m)}(x):=p^nq^mx$, for $n,m\in\Z$ and $x\in \Z[\frac{1}{pq}]$. We will show that the trosion-free group $\mathbb{Z}[\frac{1}{pq}]\rtimes\Z^2$ is algebraically $\cs$-unique. By \cite[Lemma 2.2]{zbMATH07050829}, this is equivalent to showing that, given an ideal $I\unlhd \mathrm{C}^*(\Z[\frac{1}{pq}]\rtimes\Z^2)$ such that $I\cap\C[\Z[\frac{1}{pq}]\rtimes\Z^2]=\{0\}$, we have that $I=\{0\}$.

\begin{remark}

In \cite{zbMATH06761943}, Huang and Wu  studied unitary representations of $\Z[\frac{1}{pq}]\rtimes\Z^2$ in connection with Furstenberg's conjecture on $\times p-$ and $\times q-$invariant probability measures on $\mathbb{T}$.

\end{remark}

Notice that, because $p$ and $q$ are multiplicatively independent, we have that, if $n,m\in \Z$ are not both zero, then $p^nq^m\neq 1$. This implies that the action of $\Z^2$ on $\Z[\frac{1}{pq}]$ is faithful. 

Recall that an action of a group $G$ on a locally compact Hausdorff space $X$ is
said to be \emph{topologically free} if, for each $g\in G\setminus\{e\}$, the set of points of $X$ fixed by
$g$ has empty interior.

The following lemma follows easily from \cite[Lemma 2.1]{zbMATH06465649}. For the sake of completeness, we include a proof.

\begin{lemma}
Let $A$ be a torsion-free discrete abelian group and $\beta\colon G\curvearrowright A$ a faithful action. Then $\widehat{\beta}\colon G\curvearrowright \widehat{A}$ is topologically free.
\end{lemma}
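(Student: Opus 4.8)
The plan is to realize the fixed-point set of each $\widehat{\beta}_g$ as the annihilator of a subgroup of $A$, and then to rule out nonempty interior using torsion-freeness together with two standard facts from Pontryagin duality. I identify $\widehat{A}$ with the compact abelian group $\mathrm{Hom}(A,\T)$, on which $G$ acts by $\widehat{\beta}_g(\chi)=\chi\circ\beta_g^{-1}$. Fix $g\in G\setminus\{e\}$ and put $T:=\beta_g\in\mathrm{Aut}(A)$, so $T\neq\mathrm{id}$ by faithfulness. Since $A$ is abelian, $T-\mathrm{id}$ is a group endomorphism of $A$, hence $B_g:=(T-\mathrm{id})(A)$ is a subgroup of $A$, and it is nonzero. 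A character $\chi$ is fixed by $\widehat{\beta}_g$ precisely when $\chi\circ T=\chi$, i.e.\ $\chi(Ta-a)=1$ for all $a\in A$; thus the fixed-point set of $\widehat{\beta}_g$ equals the annihilator $B_g^{\perp}=\{\chi\in\widehat{A}:\chi|_{B_g}=1\}$, which is a closed subgroup of $\widehat{A}$.

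Next I would show $B_g^{\perp}$ has empty interior. Recall that a closed subgroup of a compact group has nonempty interior if and only if it is open, if and only if it has finite index (the cosets of an open subgroup form an open partition of the compact group). So suppose toward a contradiction that $B_g^{\perp}$ has finite index $n$ in $\widehat{A}$. Then $n\chi\in B_g^{\perp}$ for every $\chi\in\widehat{A}$, which says $\chi\big(n(Ta-a)\big)=1$ for all $\chi\in\widehat{A}$ and all $a\in A$. Because the characters of a discrete abelian group separate its points, this forces $n(Ta-a)=0$ for every $a$, and torsion-freeness of $A$ gives $Ta=a$ for all $a$, i.e.\ $\beta_g=\mathrm{id}$. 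Since $g\neq e$, this contradicts faithfulness of $\beta$. Hence $B_g^{\perp}$ has empty interior, and as $g$ was arbitrary, $\widehat{\beta}$ is topologically free.

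The argument is soft, and the only steps that need a word of justification are the two appeals to Pontryagin duality (separation of points of $A$ by its dual, and the characterization of open subgroups of a compact group) and the remark that $T-\mathrm{id}$ is a homomorphism. The genuine hypothesis in play is torsion-freeness: if $B_g$ were a nonzero \emph{finite} subgroup, its annihilator would have finite index and hence nonempty interior, so topological freeness would fail. If one prefers to bypass the finite-index fact, one can instead note $\widehat{A}/B_g^{\perp}\cong\widehat{B_g}$ and that $B_g$, being nonzero and torsion-free, is infinite, so $B_g^{\perp}$ has infinite index and is therefore nowhere dense; I would probably just record the shorter route above.
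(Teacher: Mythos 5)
Your proof is correct, but it takes a genuinely different route from the paper's. The paper also observes that the fixed-point set of $\widehat{\beta}_g$ is a subgroup which is closed (by continuity of $\widehat{\beta}_g$) and, if it had nonempty interior, open; it then invokes the classical duality fact that $A$ torsion-free implies $\widehat{A}$ connected, so a nonempty clopen subgroup must be all of $\widehat{A}$, whence $\widehat{\beta}_g=\mathrm{id}$ and faithfulness gives $g=e$. You instead identify the fixed-point set concretely as the annihilator of $B_g=(\beta_g-\mathrm{id})(A)$ and rule out nonempty interior via the finite-index characterization of open subgroups of compact groups, feeding torsion-freeness in at the level of elements through the implication $n(\beta_g a-a)=0\Rightarrow\beta_g a=a$. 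Both arguments are soft Pontryagin-duality arguments of comparable length; yours has the advantage of not needing the ``torsion-free dualizes to connected'' equivalence as a black box (only separation of points and the compactness argument for finite index), and it makes visible exactly where torsion-freeness enters, as your closing remark about finite $B_g$ emphasizes. The paper's version is slightly shorter granted that equivalence. One cosmetic note: you fix $g\neq e$ at the outset and write $T\neq\mathrm{id}$ ``by faithfulness,'' which is the intended reading, but it is worth stating explicitly that this is precisely where faithfulness is used, since the rest of the argument never appeals to it again.
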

\begin{proof}

Take $g\in G$ such that the set $F_g$ of points of $\widehat{A}$ fixed by $\widehat{\beta}_g$ has non-empty interior; we will show that $g=e$.

Since $F_g$ is a subgroup of $\widehat{A}$, the fact that $F_g$ has non-empty interior implies that $F_g$ is open. Moreover, since $\widehat{\beta}_g$ is continuous, we also have that $F_g$ is closed.

From the fact that $A$ is torsion-free, we obtain that $\widehat{A}$ is connected, hence $F_g=\widehat{A}$. Since $\beta$ is a faithful action, we conclude that $g=e$.

\end{proof}

From the lemma above, we obtain that the action $\widehat{\alpha}\colon\Z^2\curvearrowright\widehat{\Z[\frac{1}{pq}]}$ is topologically free. Hence, given a non-zero ideal $I\unlhd \mathrm{C}^*(\Z[\frac{1}{pq}]\rtimes \Z^2)\simeq C(\widehat{\Z[\frac{1}{pq}]})\rtimes\Z^2,$
 we have that $I\cap \mathrm{C}^*(\Z[\frac{1}{pq}])\neq\{0\}$ (for a proof of this general fact about topologically free actions, see, for instance, \cite[Theorem 29.5]{zbMATH06801030}).

Let 
\begin{align*}
\varphi\colon \mathbb{T}&\to \mathbb{T}\\
z&\mapsto z^{pq}
\end{align*}

and $X:=\varprojlim(\mathbb{T},\varphi)=\{(x_n)_{n\in\N}\in\prod_{n\in\N}\mathbb{T}:\forall n\in\N,x_n=\varphi(x_{n+1})\}$.

\begin{lemma}\label{tec}
There is an isomorphism $\tilde{\psi}\colon\cs(\Z[\frac{1}{pq}])\to C(X)$ such that $$\tilde{\psi}(\delta_{\frac{a}{(pq)^m}})(x)=(x_m)^a,$$ for $a\in\Z$, $m\in\N$ and $x=(x_n)_{n\in\N}\in X$.
\end{lemma}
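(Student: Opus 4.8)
The plan is to obtain $\tilde\psi$ as the composition of the Fourier transform $\cs(\Z[\frac{1}{pq}])\xrightarrow{\ \sim\ }C(\widehat{\Z[\frac{1}{pq}]})$ with a concrete homeomorphism $\widehat{\Z[\frac{1}{pq}]}\cong X$. The first ingredient is the standard fact that, for a discrete abelian group $A$, the Fourier transform $\mathcal F$ is an isomorphism $\cs(A)\xrightarrow{\ \sim\ }C(\widehat A)$ carrying $\delta_g$ to the evaluation function $\chi\mapsto\chi(g)$ (choosing the convention that makes this $\chi(g)$ rather than its conjugate). So the real content is to describe $\widehat{\Z[\frac{1}{pq}]}$.

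To that end I would first write $\Z[\frac{1}{pq}]$ as the direct limit of $\Z\xrightarrow{\times pq}\Z\xrightarrow{\times pq}\cdots$, where the $n$-th copy of $\Z$ maps into $\Z[\frac{1}{pq}]$ via $1\mapsto\frac{1}{(pq)^n}$; compatibility of these maps is exactly the identity $\frac{a}{(pq)^m}=\frac{a\,pq}{(pq)^{m+1}}$, and the image of the $m$-th copy is $\frac{1}{(pq)^m}\Z$. Since Pontryagin duality turns direct limits of discrete abelian groups into inverse limits of compact abelian groups, and $\widehat{\Z}=\mathbb{T}$ with the dual of $\times pq\colon\Z\to\Z$ being precisely $\varphi\colon z\mapsto z^{pq}$, this yields a topological isomorphism $\widehat{\Z[\frac{1}{pq}]}\cong\varprojlim(\mathbb{T},\varphi)=X$. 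Unwinding the identifications, the inverse isomorphism is the map $\Phi\colon X\to\widehat{\Z[\frac{1}{pq}]}$ given by $\Phi((x_n)_n)(\frac{a}{(pq)^m})=x_m^a$, whose inverse sends $\chi$ to $\big(\chi(\frac{1}{(pq)^n})\big)_{n\in\N}$.

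If one prefers to avoid invoking the behaviour of duality under limits, $\Phi$ can be produced by hand: the formula $\Phi((x_n)_n)(\frac{a}{(pq)^m}):=x_m^a$ is well defined because, by induction from $x_n=\varphi(x_{n+1})=x_{n+1}^{pq}$, one has $x_{m+k}^{a(pq)^k}=x_m^a$ for every $k$, so the value does not depend on the chosen representation of $\frac{a}{(pq)^m}$; it is visibly a group homomorphism $\Z[\frac{1}{pq}]\to\mathbb{T}$ and is continuous in $(x_n)_n$; the assignment $\chi\mapsto(\chi(\frac{1}{(pq)^n}))_n$ is a two-sided inverse (the tuple lands in $X$ since $\chi(\frac{1}{(pq)^n})^{pq}=\chi(\frac{1}{(pq)^{n-1}})$); and, $X$ and $\widehat{\Z[\frac{1}{pq}]}$ being compact Hausdorff, the continuous bijection $\Phi$ is a homeomorphism. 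Finally, setting $\tilde\psi:=(f\mapsto f\circ\Phi)\circ\mathcal F$ and chasing $\delta_{a/(pq)^m}$ through the two maps gives $\tilde\psi(\delta_{a/(pq)^m})(x)=\Phi(x)(\frac{a}{(pq)^m})=x_m^a$, as required. The only genuinely delicate point is this well-definedness check (equivalently, that $\Phi$ really maps into $\widehat{\Z[\frac{1}{pq}]}$, i.e.\ respects the relations among the $\delta$'s); the remainder is a routine diagram chase.
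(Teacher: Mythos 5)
Your proof is correct and follows essentially the same route as the paper: both pass through the Gelfand/Fourier isomorphism $\cs(\Z[\frac{1}{pq}])\cong C(\widehat{\Z[\frac{1}{pq}]})$ and identify $\widehat{\Z[\frac{1}{pq}]}$ with $X$ via the map $\tau\mapsto(\tau(\frac{1}{(pq)^n}))_n$ (your $\Phi$ is exactly the inverse of the paper's $H$). The only cosmetic difference is that you establish the homeomorphism $\widehat{\Z[\frac{1}{pq}]}\cong X$ directly and transport, whereas the paper builds $\tilde\psi$ as a unitary representation, gets surjectivity from Stone--Weierstrass, and gets injectivity from the factorization $\tilde H\circ\tilde\psi=\mathrm{Ev}$.
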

\begin{proof}
Let $$\mathrm{Ev}\colon \cs\left(\Z\left[\frac{1}{pq}\right]\right)\to C\left(\widehat{\Z\left[\frac{1}{pq}\right]}\right)$$ be the isomorphism given by point-evaluation, i.e., given $u\in \Z[\frac{1}{pq}]$ and $\tau\in\widehat{\Z[\frac{1}{pq}]}$, we have that $\mathrm{Ev}(\delta_u)(\tau)=\tau(u)$.

Let $H\colon \widehat{\Z[\frac{1}{pq}]}\to X$ be the continuous map given by $H(\tau):=(\tau(\frac{1}{(pq)^n}))_{n\in\N}$, for $\tau\in\widehat{\Z[\frac{1}{pq}]}$. Also let $\tilde{H}\colon C(X)\to C(\widehat{\Z[\frac{1}{pq}]})$ be the homomorphism induced by $H$.

Let $\psi\colon \Z[\frac{1}{pq}]\to C(X)$ be given by $\psi(\frac{a}{(pq)^m})(x)=(x_m)^a$, for $a\in\Z$, $m\in\N$ and $x=(x_n)_{n\in\N}\in X$. It is straightforward to check that $\psi$ is a well-defined unitary representation of $\Z[\frac{1}{pq}]$. Let $\tilde{\psi}\colon \cs(\Z[\frac{1}{pq}])\to C(X)$ be the canonical extension of $\psi$. An application of the Stone-Weierstrass theorem shows that $\tilde{\psi}$ is surjective.

Furthermore, it can be readily checked that $\tilde{H}\circ\tilde{\psi}=\mathrm{Ev}$. Since we know that $\mathrm{Ev}$ is an isomorphism, we conclude that $\tilde{\psi}$ is also an isomorphism.
\end{proof}

We denote by $\tilde{\alpha}$ the action of $\Z^2$ on $\cs(\Z[\frac{1}{pq}])$ induced by $\alpha$. There is an action $\beta\colon\Z^2\curvearrowright X$ such that, for $f\in C(X)$ and $(r,s)\in\Z^2$, we have that $\tilde{\psi}\circ\tilde{\alpha}_{(r,s)}\circ\tilde{\psi}^{-1}(f)=f\circ\beta_{(r,s)}^{-1}$. One can readily check that, for $x\in X$ and $r,s\in\Z$ non-negative integers, it holds that $\beta_{(r,s)}^{-1}(x)=x^{p^rq^s}$. Furthermore, $\beta_{(1,1)}$ is the left shift on $X$.

We will need the following result of Furstenberg (\cite{zbMATH03236196}), whose precise formulation we take from \cite[Theorem 1.2]{zbMATH00672257}:

\begin{theorem}\label{furs}
If $B\subset \mathbb{T}$ is an infinite closed set which is $\times p-$ and $\times q-$invariant, then $B=\mathbb{T}$.
\end{theorem}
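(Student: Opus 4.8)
This is Furstenberg's classical theorem on $\times p$- and $\times q$-invariant closed subsets of the circle, which we shall only invoke; what follows is the plan of his proof, the argument one would carry out to establish it. First I would reduce to an orbit closure of an irrational. Let $\Sigma:=\{p^iq^j:i,j\in\N\}$ be the multiplicative semigroup generated by $p$ and $q$. A point of $\mathbb{T}$ with finite $\Sigma$-orbit is rational, whereas for irrational $\alpha$ the set $\{\sigma\alpha\bmod 1:\sigma\in\Sigma\}$ is infinite: if $p^iq^j\alpha\equiv p^kq^l\alpha\pmod 1$ then $p^iq^j=p^kq^l$ by irrationality, hence $(i,j)=(k,l)$ by multiplicative independence. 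A short preliminary argument shows that any infinite closed $\times p$- and $\times q$-invariant set contains an irrational $\alpha$; then $A:=\overline{\{\sigma\alpha\bmod 1:\sigma\in\Sigma\}}$ is an infinite closed $\times p$- and $\times q$-invariant subset of $B$, and it is enough to prove $A=\mathbb{T}$. So from this point I would assume $B=A=\overline{\Sigma\alpha}$ with $\alpha$ irrational.

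The hypothesis on $p$ and $q$ enters only through the following Diophantine fact: $p$ and $q$ are multiplicatively independent exactly when $\log p/\log q\notin\Q$, and, by Weyl's equidistribution theorem (equivalently Kronecker's theorem), this forces $\Sigma$ to be \emph{non-lacunary} --- the ratios of consecutive elements of $\Sigma$, listed in increasing order, remain bounded; in fact for every $\varepsilon>0$ there is an element of $\Sigma$ in every interval $[X,(1+\varepsilon)X]$ with $X$ sufficiently large. This refinement of ``multiplicative scales'', which a single-generator semigroup $\{p^i\}$ lacks, is the only way the hypothesis is used.

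The heart of the proof --- and the step I expect to be the main obstacle --- is to rule out gaps in $A$. Suppose $A\neq\mathbb{T}$; then $\mathbb{T}\setminus A$ is a nonempty union of maximal open arcs, and since their lengths sum to at most $1$ only finitely many exceed a given size, so there is a longest gap $I_0$, of length $\ell>0$. Invariance of $A$ under $\times\sigma$ is the inclusion $A\subseteq\sigma^{-1}(A)$, equivalently $\sigma^{-1}(\mathbb{T}\setminus A)\subseteq\mathbb{T}\setminus A$; hence for each $\sigma\in\Sigma$ the preimage $\sigma^{-1}(I_0)$ is a union of $\sigma$ equally spaced arcs, each of length $\ell/\sigma$, and every one of these arcs lies inside a single gap of $A$, whose length is therefore between $\ell/\sigma$ and $\ell$. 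The contradiction is obtained by a packing argument: using non-lacunarity one selects preimage-arcs coming from suitably many $\sigma\in\Sigma$ and, keeping track of their positions $(a+m)/\sigma$, arranges them to chain together --- each abutting or overlapping the next inside $\mathbb{T}\setminus A$ --- into a single arc of length strictly greater than $\ell$ (already $\sum_{\sigma\in\Sigma}\ell/\sigma=\ell\,\frac{pq}{(p-1)(q-1)}>\ell$, so enough total length is available); such an arc lies inside one gap of $A$, contradicting the maximality of $\ell$. Hence $A$ has no gap, i.e.\ $A=\mathbb{T}$. The delicate part is exactly this packing step: converting the availability of arbitrarily fine scales $\ell/\sigma$ afforded by non-lacunarity into the rigid statement that the preimages of the largest gap cannot all be absorbed by gaps of length at most $\ell$. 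The remaining steps, including the reduction above, are routine; for complete details we refer to Furstenberg \cite{zbMATH03236196} and to \cite[Theorem~1.2]{zbMATH00672257}.
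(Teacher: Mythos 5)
The paper does not prove this statement: it is Furstenberg's theorem, quoted verbatim from \cite[Theorem 1.2]{zbMATH00672257} and used as a black box, so your choice to ultimately invoke the references is exactly what the paper does. Your accompanying outline is consistent with the standard proofs and correctly isolates where multiplicative independence enters (non-lacunarity of the semigroup $\{p^iq^j\}$), though, as you yourself acknowledge, the reduction to an irrational orbit closure and the final packing/chaining step are only sketched, so the argument as written still rests on the cited sources rather than replacing them.
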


\begin{theorem}
The group $\Z[\frac{1}{pq}]\rtimes\Z^2$ is algebraically $\cs$-unique.
\end{theorem}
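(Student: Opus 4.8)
The plan is to use the reformulation already recorded above: by \cite[Lemma 2.2]{zbMATH07050829} it suffices to prove that an ideal $I\unlhd\cs(\Z[\frac1{pq}]\rtimes\Z^2)$ with $I\cap\C[\Z[\frac1{pq}]\rtimes\Z^2]=\{0\}$ must be zero, so I would assume $I\neq\{0\}$ and derive a contradiction. Transporting everything through the isomorphism $\cs(\Z[\frac1{pq}]\rtimes\Z^2)\simeq C(X)\rtimes_\beta\Z^2$ furnished by $\tilde\psi$ (Lemma \ref{tec}) and the equivariance recorded above, the topological freeness of $\widehat\alpha$ noted above gives $I\cap C(X)\neq\{0\}$; and since $I$ is a (closed, two-sided) ideal, $I\cap C(X)$ is a $\beta$-invariant closed ideal of $C(X)$. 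Hence $I\cap C(X)=\{f\in C(X):f|_Y=0\}$ for some $\beta$-invariant closed set $Y\subsetneq X$, and the problem reduces to excluding the existence of such a $Y$.

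Write $\pi_m\colon X\to\mathbb{T}$ for the $m$-th coordinate map and put $F:=\pi_0(Y)$. Then $F$ is closed, being the continuous image of a compact set, and using $\beta_{(1,0)}^{-1}(x)=x^p$, $\beta_{(0,1)}^{-1}(x)=x^q$ together with the $\beta$-invariance of $Y$ one checks that $F$ is $\times p$- and $\times q$-invariant. Theorem \ref{furs} then forces $F$ to be either finite or all of $\mathbb{T}$, and I would treat these two cases separately.

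If $F=\{\zeta_1,\dots,\zeta_k\}$ is finite, consider $f(x):=\prod_{j=1}^k(x_0-\zeta_j)$. Since the coordinate function $x\mapsto x_0$ equals $\tilde\psi(\delta_1)$, this $f$ corresponds to a nonzero element of $\C[\Z[\frac1{pq}]]$; it vanishes on $Y$ because $\pi_0(Y)=F$; and it is nonzero in $C(X)$ because $\pi_0$ is surjective while $F\neq\mathbb{T}$. Hence $f$ yields a nonzero element of $I\cap\C[\Z[\frac1{pq}]\rtimes\Z^2]$ --- a contradiction. If instead $F=\mathbb{T}$, I would use that $\beta_{(1,1)}$ is the left shift on $X$: from $\beta_{(1,1)}^{\pm1}(Y)=Y$ one gets $\pi_m(Y)=\pi_{m-1}(Y)$ for every $m\geq1$, hence $\pi_m(Y)=\mathbb{T}$ for all $m$. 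Then any $x\in X$ is a limit of points of $Y$ agreeing with it on the first $m+1$ coordinates (choosing $y\in Y$ with $y_m=x_m$ and using $y_i=\varphi^{m-i}(y_m)=\varphi^{m-i}(x_m)=x_i$ for $i\leq m$); since $Y$ is closed this gives $Y=X$, contradicting $Y\subsetneq X$.

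Most of this is bookkeeping: the identifications of the $\cs$-algebras with function algebras and with group rings, and the invariance checks for $Y$ and $F$. The substance is carried entirely by Furstenberg's dichotomy (Theorem \ref{furs}); the one mildly delicate step is the end of the case $F=\mathbb{T}$, where one must pass from ``$\pi_m(Y)=\mathbb{T}$ for every $m$'' to ``$Y=X$'' using the inverse-limit topology on $X$.
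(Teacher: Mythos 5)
Your proposal is correct and takes essentially the same route as the paper: reduce to $I\cap C(X)$ via topological freeness, push the corresponding invariant closed subset of $X$ down to a closed $\times p$- and $\times q$-invariant subset of $\mathbb{T}$, rule out the finite case by exhibiting a nonzero polynomial in $\C[\Z[\frac{1}{pq}]]\cap I$, and apply Theorem \ref{furs}. Your explicit inverse-limit argument in the case $F=\mathbb{T}$ simply fills in the step the paper states as ``hence $F=X$''.
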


\begin{proof}

Let $I\unlhd C(X)\rtimes\Z^2$ be an ideal and suppose that, under the identification given in Lemma \ref{tec}, we have that $I\cap\C[\Z[\frac{1}{pq}]]=\{0\}$. We will show that $I\cap C(X)=\{0\}$, and therefore $I=\{0\}$.

Notice that $C(X)\cap I$ is a $\Z^2$-invariant ideal of $C(X)$, hence there is $F\subset X$ a $\Z^2$-invariant closed set such that $C(X)\cap I=C_0(F^c)$. 

For $n\in\N$, let $\pi_n\colon X\to \mathbb{T}$ be the canonical projection. Let $B:=\pi_1(F)$. The fact that $F$ is $\Z^2$-invariant implies that $B=\pi_n(F)$ for $n\in\N$ and that $$B=\{z^p:z\in B\}=\{z^q:z\in B\}.$$

Since $I\cap \C[\Z[\frac{1}{pq}]]=\{0\}$, we have that $B$ contains infinitely many points, for otherwise there would be a non-zero polynomial vanishing on $B$. 

Using Theorem \ref{furs}, we conclude that $B=\mathbb{T}$, hence $F=X$ and $I=\{0\}$.
\end{proof}
\bibliographystyle{acm}
\bibliography{bibliografia}
\end{document}